\newtheorem{thm}{Theorem}[section] 
\newtheorem{prop}[thm]{Proposition}
\newtheorem{cor}[thm]{Corollary} 
\newtheorem{lem}[thm]{Lemma}
\theoremstyle{definition} 
\newtheorem{rem}[thm]{Remark}
\newtheorem*{ackn}{Acknowledgement}
\numberwithin{equation}{section}
\begin{document}
\baselineskip=16pt

\title{Variation of the modulus of a foliation}
\author{M. Ciska}
\date{}

\begin{abstract}
The $p$--modulus ${\rm mod}_p(\mathcal{F})$ of a foliation $\mathcal{F}$ on a Riemannian manifold $M$ is a generalization of extremal length of plane curves introduced by L. Ahlfors. We study the variation $t\mapsto{\rm mod}_p(\mathcal{F}_t)$ of the modulus. In particular, we consider product of moduli of orthogonal foliations.
\end{abstract}

\subjclass[2000]{53C12; 58C35; 58E30}
\keywords{Foliation, variation, modulus}

\address{
Department of Mathematics and Computer Science \endgraf
University of \L\'{o}d\'{z} \endgraf
ul. Banacha 22, 90-238 \L\'{o}d\'{z} \endgraf
Poland
}
\email{mciska@math.uni.lodz.pl}

\maketitle

\section{Introduction}

Modulus, an inverse of extremal length, of a family of a plane curves is a conformal invariant \cite{ab}. The notion of a modulus can be generalized to any family of submanifolds \cite{bf,bl1} and, hence, to a foliation. Roughly speaking, the $p$--modulus ${\rm mod}_p(\mathcal{F})$ of a $k$--dimensional foliation on a $n$--dimensional Riemannian manifold is the infimum of $p$--th norm over all nonnegative, $p$--integrable functions $f$ such that $\int_Lf\geq 1$ for almost every $L\in\mathcal{F}$. If $n=kp$, then the amount ${\rm mod}_p(\mathcal{F})$ is a conformal invariant.

In this paper, we study the variation of a modulus. We generalize the result obtained by Kalina and Pierzchalski \cite{kp} for codimension one foliations given by a submersion. We assume the existence of a function $f_0$ which realizes the $p$--modulus and do not put any requirements on the dimension and codimension of a foliation on a Riemannian manifold $(M,g)$. The methods used here are different than the one used in \cite{kp} and rely on a integral formula obtained by the author in \cite{ci}:
\begin{equation*}
\int_M f_0^{p-1}\varphi\, d\mu_M=\int_M f_0^p\widehat{\varphi}\,d\mu_M,\quad \textrm{where} \quad\widehat{\varphi}(x)=\int_{L_x}\varphi\, d\mu_{L_x}. 
\end{equation*}

The main formula is the following
\begin{equation}\label{eq:variation}
\frac{d}{dt}{\rm mod}_p(\mathcal{F}_t)^p_{t=0}= -p\int_M f_0^{p-1}\left(g(\nabla f_0,X)+ f_0\rm div_{\mathcal{F}_0}X\right)\,d\mu_M.
\end{equation} 
which is valid for all {\it admissible} foliations i.e. foliations satisfying certain assumptions (see Theorem \ref{t_variation}). We show that all foliations given by a submersion are admissible (Theorem \ref{t_admiss}).

Using the formula \eqref{eq:variation} we obtain conditions for a foliation to be a critical point of a variation. We show that foliation is a critical point of a variation if and only if the gradient of extremal function $f_0$ is a combination of mean curvature of a foliation and distribution orthogonal to this foliation (Corollary \ref{c_critpoint}).

\section{Preliminaries}

Let $(M,g)$ be a Riemannian manifold, $\mathcal{F}$ a $k$--dimensional foliation on $M$. Let $\mu_M$ and $\mu_L$ denote Lebesgue measures on $M$ and $L\in\mathcal{F}$, respectively. Fix the coefficient $p>1$ and let $L^p(M)$ be a space of all $p$--integrable functions on $M$ with respect to $\mu_M$ with the norm $\|f\|_p=\left(\int_M |f|^p\,d\mu_M\right)^{\frac{1}{p}}$. Denote by ${\rm adm}_p(\mathcal{F})$ a subfamily of $L^p(M)$ of all nonnegative functions $f$ such that $\int_L f\,d\mu_L\geq 1$ for almost every $L\in\mathcal{F}$. The $p$--{\it modulus} ${\rm mod}_p(\mathcal{F})$ of $\mathcal{F}$ is defined as follows
\begin{equation*}
{\rm mod}_p(\mathcal{F})=\inf_{f\in{\rm adm}_p(\mathcal{F})}\|f\|_p
\end{equation*}
if ${\rm adm}_p(\mathcal{F})\neq\emptyset$ and ${\rm mod}_p(\mathcal{F})=\infty$ otherwise \cite{ci}. Function $f_0\in{\rm adm}_p(\mathcal{F})$ which realizes the modulus i.e.
\begin{equation*}
\|f_0\|_p={\rm mod}_p(\mathcal{F})
\end{equation*}
is called {\it extremal function} for $p$--modulus of $\mathcal{F}$. An extremal function does not exist for any foliation. Namely, we have the following characterization of existence of $f_0$.

\begin{prop}[\cite{ci}]\label{p_extfun}
There exists an extremal function for $p$--modulus of a foliation $\mathcal{F}$ if and only if for any subfamily $\mathcal{L}\subset\mathcal{F}$ such that ${\rm mod}_p(\mathcal{F})=0$ we have $\mu(\bigcup \mathcal{L})=0$.
\end{prop}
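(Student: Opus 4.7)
The plan is to prove the two implications separately, using (i) uniform convexity of $L^p$ to extract a strong limit of a minimizing sequence, and (ii) a Fuglede-type lemma that converts $L^p$-convergence of $f_n$ into convergence of leaf-integrals $\int_L f_n$ off an exceptional subfamily of modulus zero. The hypothesis is precisely what lets us pass from ``exceptional set of modulus zero'' to ``exceptional set of leaves whose union is $\mu_M$-null,'' which is the natural notion of ``almost every leaf'' used in the definition of ${\rm adm}_p$.

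\textbf{Sufficiency.} Assume ${\rm mod}_p(\mathcal{F})<\infty$ and take a minimizing sequence $f_n\in{\rm adm}_p(\mathcal{F})$ with $\|f_n\|_p\to{\rm mod}_p(\mathcal{F})$. Since ${\rm adm}_p(\mathcal{F})$ is convex, the standard Clarkson-type argument (uniform convexity of $L^p$ for $p>1$) shows that $(f_n)$ is Cauchy in $L^p(M)$, hence converges in norm to some $f_0\in L^p(M)$, which is clearly nonnegative and satisfies $\|f_0\|_p={\rm mod}_p(\mathcal{F})$. The crucial step is to show $f_0\in{\rm adm}_p(\mathcal{F})$. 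For this, I would invoke a Fuglede-type lemma for foliations: passing to a subsequence, there is an exceptional subfamily $\mathcal{L}\subset\mathcal{F}$ with ${\rm mod}_p(\mathcal{L})=0$ such that $\int_L f_{n_k}\,d\mu_L\to\int_L f_0\,d\mu_L$ for every $L\in\mathcal{F}\setminus\mathcal{L}$. By hypothesis, $\mu_M(\bigcup\mathcal{L})=0$, and each admissibility failure set of $f_{n_k}$ also has $\mu_M$-null union. Taking the union of all these exceptional sets yields a subfamily with $\mu_M$-null union off which $\int_L f_0\geq 1$, so $f_0\in{\rm adm}_p(\mathcal{F})$ and is extremal.

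\textbf{Necessity.} Suppose $f_0\in{\rm adm}_p(\mathcal{F})$ is extremal and let $\mathcal{L}\subset\mathcal{F}$ satisfy ${\rm mod}_p(\mathcal{L})=0$. Set $A=\bigcup\mathcal{L}$; I want to show $\mu_M(A)=0$. Pick $g_n\in{\rm adm}_p(\mathcal{L})$ with $\|g_n\|_p\to 0$ and consider
\begin{equation*}
h_n=g_n+f_0\,\chi_{M\setminus A}.
\end{equation*}
Since leaves are pairwise disjoint, for $L\in\mathcal{F}\setminus\mathcal{L}$ one has $L\cap A=\emptyset$, hence $\int_L h_n\geq\int_L f_0\geq 1$ a.e., while for $L\in\mathcal{L}$ one has $\int_L h_n\geq\int_L g_n\geq 1$ a.e., so $h_n\in{\rm adm}_p(\mathcal{F})$. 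The triangle inequality gives
\begin{equation*}
{\rm mod}_p(\mathcal{F})\leq\|h_n\|_p\leq\|g_n\|_p+\|f_0\,\chi_{M\setminus A}\|_p.
\end{equation*}
If $\mu_M(A\cap\{f_0>0\})>0$, then $\|f_0\,\chi_{M\setminus A}\|_p<\|f_0\|_p={\rm mod}_p(\mathcal{F})$, and letting $n\to\infty$ contradicts extremality. Otherwise $f_0=0$ $\mu_M$-a.e.\ on $A$; a Fubini-type decomposition of $\mu_M$ into leaf and transversal measures then forces $\int_L f_0\,d\mu_L=0$ for a subfamily of $\mathcal{L}$ whose union still has the full measure of $A$, contradicting the admissibility of $f_0$ unless $\mu_M(A)=0$.

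\textbf{Main obstacle.} The delicate point is the Fuglede-type lemma in the sufficiency part, together with the Fubini-type step at the end of the necessity part: both require identifying the notion of ``almost every leaf'' used in the definition of ${\rm adm}_p$ with the measure-theoretic notion ``complement has $\mu_M$-null union.'' Once this identification is made precise (presumably already set up in \cite{ci}), the remaining arguments are routine convexity and triangle-inequality manipulations.
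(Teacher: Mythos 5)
The paper does not prove this proposition: it is imported verbatim from the author's preprint \cite{ci}, so there is no in-paper argument to compare yours against. Judged on its own terms, your proposal follows what is essentially the standard Fuglede-type route and its overall architecture is sound. In the sufficiency direction, convexity of ${\rm adm}_p(\mathcal{F})$ plus Clarkson's inequalities do give a norm-convergent minimizing sequence, and the Fuglede lemma you invoke (a subsequence with $\int_L|f_{n_k}-f_0|\,d\mu_L\to 0$ off a subfamily of $p$--modulus zero) is provable in this generality by the usual argument: choose $\|f_{n_k}-f_0\|_p\le 4^{-k}$, note that the family of leaves with $\int_L|f_{n_k}-f_0|\ge 2^{-k}$ has modulus at most $2^{-k}$, and use countable subadditivity of ${\rm mod}_p^p$. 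The hypothesis is then used exactly where you use it, to convert the modulus-zero exceptional family into a $\mu_M$-null union. The necessity direction via $h_n=g_n+f_0\chi_{M\setminus A}$ is also the right splice, and the disjointness of leaves makes $h_n$ admissible as you claim.

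Two points deserve flagging. First, the step you yourself identify as delicate --- passing from ``$f_0=0$ $\mu_M$-a.e.\ on the saturated set $A$'' to ``$\int_L f_0=0$ for a subfamily of $\mathcal{L}$ whose union carries all of $\mu_M(A)$'' --- is a genuine obligation, not a formality: it is precisely the identification of the two notions of ``almost every leaf'' and requires the local product structure of the foliation (Fubini in foliation charts, where $\mu_M$ has a positive density against $\mu_{\rm plaque}\otimes\mu_{\rm transversal}$). It does hold for smooth foliations, but a complete proof must carry out this covering argument; note also that the same identification is implicitly needed to know that ${\rm adm}_p(\mathcal{F})$ is closed under the averaging used in the Clarkson step. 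Second, your sufficiency argument starts from ``assume ${\rm mod}_p(\mathcal{F})<\infty$'': if ${\rm adm}_p(\mathcal{F})=\emptyset$ the proposition's right-hand condition could in principle still hold while no extremal function exists, so either a finiteness hypothesis is implicit in the statement (whose displayed condition ``${\rm mod}_p(\mathcal{F})=0$'' is surely a typo for ``${\rm mod}_p(\mathcal{L})=0$'', as you correctly read it) or this degenerate case needs a separate remark. Neither issue invalidates your approach, but both must be addressed for the proof to be complete.
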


\begin{rem}\label{rem1}
Notice that the modulus for any subfamily $\mathcal{L}\subset\mathcal{F}$ is defined in the same way considering functions defined on $M$ not only on $\bigcup\mathcal{L}$.
\end{rem}

The extremal function has the following properties.

\begin{prop}[\cite{ci}]\label{p_propext}
Assume there exists an extremal function $f_0$ for $p$--modulus of $\mathcal{F}$. Then
\begin{enumerate}
\item $\int_L f_0=1$ for almost every leaf $L\in\mathcal{F}$,
\item $f_0>0$,
\item for any $\varphi\in L^p(M)$ we have $\varphi\in L^1(L)$ for almost every leaf $L\in\mathcal{F}$. 
\end{enumerate}
\end{prop}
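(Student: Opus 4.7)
My plan is to prove the three properties in sequence, leaning throughout on strict convexity of the $L^p$-norm for $p>1$: if $f_0$ and $f_1$ were two extremals, then $(f_0+f_1)/2\in\mathrm{adm}_p(\mathcal{F})$, and Minkowski's inequality together with strict convexity would force $f_0=f_1$ almost everywhere. Thus the extremal, when it exists, is unique.

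For (1), I introduce the leaf-constant function $F(x)=\int_{L_x}f_0\,d\mu_{L_x}$, which is $\geq 1$ almost everywhere by admissibility. The renormalization $f_1=f_0/F$ is nonnegative, measurable, dominated by $f_0$ (so in $L^p$), and satisfies $\int_L f_1=1$ on almost every leaf. Hence $f_1\in\mathrm{adm}_p(\mathcal{F})$ with $\|f_1\|_p\leq\|f_0\|_p$, and uniqueness forces $f_1=f_0$ a.e. The identity $f_0/F=f_0$ then forces $F=1$ wherever $f_0>0$; on any leaf with $F>1$ one would need $f_0=0$ a.e.\ on that leaf, contradicting $\int_L f_0\geq 1$. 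So $F\equiv 1$ on a.e.\ leaf.

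For (2), I argue by contradiction. Set $Z=\{f_0=0\}$ and suppose $\mu_M(Z)>0$. By (1) the leaves entirely contained in $Z$ form a null transverse family (they would violate $\int_L f_0=1$), so a Fubini-type decomposition $\mu_M=\mu_L\otimes\nu$ provides a subfamily $\mathcal{L}_\delta=\{L:\mu_L(L\cap Z)\geq\delta\}$ of positive and finite $\nu$-measure for some $\delta>0$. Redistribute leafwise on $\bigcup\mathcal{L}_\delta$ by setting
\begin{equation*}
f_\epsilon=\begin{cases}
(1-\epsilon)f_0 & \text{on }(M\setminus Z)\cap\bigcup\mathcal{L}_\delta,\\
\epsilon/\mu_L(L\cap Z) & \text{on }Z\cap\bigcup\mathcal{L}_\delta,\\
f_0 & \text{otherwise.}
\end{cases}
\end{equation*}
Then $\int_L f_\epsilon=1$ still holds a.e., while
\begin{equation*}
\|f_\epsilon\|_p^p=\|f_0\|_p^p-p\epsilon\int_{(M\setminus Z)\cap\bigcup\mathcal{L}_\delta}f_0^p\,d\mu_M+O(\epsilon^p).
\end{equation*}
For $p>1$ the remainder $\epsilon^p$ is $o(\epsilon)$, and the first-order term is strictly negative (since $f_0>0$ on a positive $\mu_L$-subset of $L\setminus Z$ for every $L\in\mathcal{L}_\delta$), so the norm strictly decreases, contradicting extremality. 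The main obstacle is precisely this construction: making $1/\mu_L(L\cap Z)$ globally measurable and justifying the Fubini-type decomposition of $\mu_M$ along $\mathcal{F}$ require some regularity of the foliation beyond what the statement spells out, and one must restrict to a bounded transverse subfamily to keep the $\epsilon^p$-term finite.

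For (3), I apply Proposition~\ref{p_extfun}. Let $\mathcal{L}_\varphi=\{L:\int_L|\varphi|\,d\mu_L=+\infty\}$. For every $n\geq 1$, the function $|\varphi|/n$ is nonnegative, in $L^p(M)$, and admissible for $\mathcal{L}_\varphi$ (its leafwise integral is $+\infty\geq 1$ on every $L\in\mathcal{L}_\varphi$), so $\mathrm{mod}_p(\mathcal{L}_\varphi)\leq\|\varphi\|_p/n\to 0$; hence $\mathrm{mod}_p(\mathcal{L}_\varphi)=0$. Since the extremal exists by hypothesis, Proposition~\ref{p_extfun} yields $\mu_M(\bigcup\mathcal{L}_\varphi)=0$, and therefore $\varphi\in L^1(L)$ for almost every leaf $L$.
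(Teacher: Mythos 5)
The paper offers no proof of this proposition---it is quoted from the preprint \cite{ci}---so your argument can only be judged on its own terms. Part (3) is complete and is the standard Fuglede argument: the family of leaves on which $\varphi\notin L^1$ admits every $|\varphi|/n$ as an admissible function, hence has zero $p$--modulus, and Proposition \ref{p_extfun} converts this into a $\mu_M$--null union. Part (1) is the right renormalization idea, and you correctly note that $0\le f_0/F\le f_0$ with equal $L^p$--norms already forces $f_0/F=f_0$ $\mu_M$--a.e.\ (no appeal to uniqueness is needed); but the closing step, ``on any leaf with $F>1$ one would need $f_0=0$ a.e.\ on that leaf,'' conflates $\mu_M$--a.e.\ with $\mu_L$--a.e.\ on individual leaves: a $\mu_M$--null set (a single leaf, say) can have full $\mu_L$--measure. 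The repair is exactly your part-(3) technique applied to the $\mu_M$--null set $E=\{F>1\}\cap\{f_0>0\}$: the functions $n\chi_E$ show that $\{L:\mu_L(L\cap E)>0\}$ has zero modulus, hence null union by Proposition \ref{p_extfun}, so almost every leaf with $F>1$ really does satisfy $\int_L f_0=0$, contradicting admissibility.

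The genuine gap is in part (2). Your redistribution argument requires (i) a disintegration $\mu_M=\mu_L\otimes\nu$ along $\mathcal{F}$, (ii) measurability of $x\mapsto\mu_{L_x}(L_x\cap Z)$, (iii) a $\nu$--positive family of leaves on which $\mu_L(L\cap Z)$ is bounded away from $0$ \emph{and} $\infty$ with $\mu_M(Z\cap\bigcup\mathcal{L}_\delta)<\infty$, and (iv) the passage from ``$f_0>0$ on a $\mu_L$--positive subset of each such leaf'' to $\int_{(M\setminus Z)\cap\bigcup\mathcal{L}_\delta}f_0^p\,d\mu_M>0$. None of this is available at the stated level of generality---the proposition assumes nothing beyond the existence of $f_0$---and you flag the problem yourself. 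As written, part (2) is a plausible sketch for foliations carrying a measurable transverse structure, not a proof; you would need either to make the disintegration an explicit hypothesis, or to find an argument that, like parts (1) and (3), runs entirely through the modulus of exceptional subfamilies and the convexity of $\mathrm{adm}_p(\mathcal{F})$ (the variational inequality $\int_M f_0^{p-1}g\,d\mu_M\ge\int_M f_0^p\,d\mu_M$ for all $g\in\mathrm{adm}_p(\mathcal{F})$, which underlies Theorem \ref{t_intfor}, is the natural tool to try here).
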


Assume there exists an extremal function for a $p$--modulus of a foliation $\mathcal{F}$. Then, by Proposition \ref{p_propext}, for any $\varphi\in L^p(M)$ we have $\varphi\in L^1(L)$ for almost every $L\in \mathcal{F}$. Hence, the following function
\begin{equation*}
\widehat{\varphi}(x)=\int_{L_x}\varphi\,d\mu_{L_x},\quad x\in L_x\in\mathcal{F}.
\end{equation*}
is well defined. 

\begin{thm}[\cite{ci}]\label{t_intfor}
Let $f_0$ be an extremal function for $p$--modulus of $\mathcal{F}$. Let $\varphi\in L^p(M)$ be such that ${\rm esssup}|\varphi|<\infty$ and ${\rm esssup}|\widehat{\varphi}|<\infty$. Then
\begin{equation*}
\int_M f_0^{p-1}\varphi\,d\mu_M=\int_M f_0^p\widehat{\varphi}\,d\mu_M.
\end{equation*}
\end{thm}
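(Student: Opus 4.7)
The argument is variational: since $f_0$ minimizes $\|\cdot\|_p^p$ over the convex set ${\rm adm}_p(\mathcal{F})$, any admissible perturbation of $f_0$ yields a non-negative directional derivative, and a perturbation that remains admissible in both directions forces the derivative to vanish. The proof reduces to choosing a perturbation whose Euler--Lagrange identity is precisely the one asserted.

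The key choice is
\begin{equation*}
\eta := \varphi - f_0\widehat{\varphi}.
\end{equation*}
Since $\widehat{\varphi}$ is constant along each leaf it factors out of the leafwise integral, and Proposition \ref{p_propext}(1) gives $\int_{L_x}f_0\,d\mu_{L_x}=1$ for almost every $x$. Therefore
\begin{equation*}
\widehat{\eta}(x)=\widehat{\varphi}(x)-\widehat{\varphi}(x)\int_{L_x}f_0\,d\mu_{L_x}=0
\end{equation*}
a.e., so $\int_L(f_0+t\eta)\,d\mu_L=1$ for every $t\in\mathbb{R}$ and the leaf-integral constraint is preserved for both signs.

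The main technical obstacle is non-negativity: Proposition \ref{p_propext}(2) supplies only $f_0>0$ pointwise, while $\eta$ need not be small relative to $f_0$. I would circumvent this by passing to positive parts: set $g_t:=(f_0+t\eta)^+$. The pointwise inequality $x^+\geq x$ yields $\int_L g_t\,d\mu_L\geq\int_L(f_0+t\eta)\,d\mu_L=1$, so $g_t\in{\rm adm}_p(\mathcal{F})$, and the minimality bound $\|g_t\|_p^p\geq\|f_0\|_p^p$ holds for all sufficiently small $|t|$.

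Finally, I would differentiate $\|g_t\|_p^p$ at $t=0$. Splitting
\begin{equation*}
\|g_t\|_p^p-\|f_0\|_p^p=\int_{\{f_0+t\eta\geq 0\}}\bigl[(f_0+t\eta)^p-f_0^p\bigr]\,d\mu_M-\int_{\{f_0+t\eta<0\}}f_0^p\,d\mu_M,
\end{equation*}
the hypotheses ${\rm esssup}|\varphi|<\infty$ and ${\rm esssup}|\widehat{\varphi}|<\infty$ give $|\eta|\leq C_1+C_2 f_0$, whence $f_0^{p-1}|\eta|\leq C_1 f_0^{p-1}+C_2 f_0^p\in L^1(M)$ by H\"older and $f_0\in L^p$; dominated convergence then produces the principal contribution $pt\int_M f_0^{p-1}\eta\,d\mu_M+o(t)$. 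The second integral is confined to the set $\{f_0\leq C|t|\}$, on which $f_0^p=O(|t|^p)$, and is therefore $o(t)$ thanks to $p>1$. The variational inequality applied with $\pm t$ then forces $\int_M f_0^{p-1}\eta\,d\mu_M=0$, which unfolds to $\int_M f_0^{p-1}\varphi\,d\mu_M=\int_M f_0^p\widehat{\varphi}\,d\mu_M$.
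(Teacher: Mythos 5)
The paper only quotes Theorem \ref{t_intfor} from \cite{ci} and contains no proof of it, so there is nothing internal to compare against; judged on its own, your variational scheme is the natural one and is surely the intended argument: perturb $f_0$ by $t\eta$ with $\eta=\varphi-f_0\widehat{\varphi}$, note $\widehat{\eta}=0$ via Proposition \ref{p_propext}(1), restore nonnegativity by taking positive parts, and read the identity off as the Euler--Lagrange equation of the constrained minimum. The structure is correct, and the choice of $\eta$ is exactly the right one.

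Two of your estimates, however, do not quite close as written. First, from $|\eta|\le C_1+C_2f_0$ you conclude $f_0^{p-1}|\eta|\le C_1f_0^{p-1}+C_2f_0^p\in L^1(M)$; but $f_0^{p-1}\in L^1(M)$ requires $\mu_M(M)<\infty$, which the theorem does not assume. Keep $|\varphi|$ instead of discarding it for its essential supremum: $|\eta|\le|\varphi|+Cf_0$ gives $f_0^{p-1}|\eta|\le f_0^{p-1}|\varphi|+Cf_0^p\in L^1(M)$ by H\"older, since $f_0^{p-1}\in L^{p/(p-1)}(M)$ and $\varphi\in L^p(M)$. The same care is needed for the dominating function of the difference quotient on $\{f_0+t\eta\ge0\}$: the mean value theorem gives a bound by $p(f_0+|t\eta|)^{p-1}|\eta|\le C\left(f_0^{p-1}|\eta|+|\eta|^p\right)$ for $|t|\le1$, both summands integrable because $\eta\in L^p(M)$. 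Second, your disposal of $\int_{\{f_0+t\eta<0\}}f_0^p\,d\mu_M$ (``the set is contained in $\{f_0\le C|t|\}$, where $f_0^p=O(|t|^p)$, hence the integral is $o(t)$'') integrates a pointwise bound over a set whose measure you have not controlled. The clean version: on $\{f_0+t\eta<0\}$ one has $0<f_0<|t|\,|\eta|$, so the integral is at most $|t|^p\|\eta\|_p^p=o(t)$ since $p>1$. With these two repairs the argument is complete.
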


Consider now a foliation $\mathcal{F}$ given by the level sets of a submersion $\Phi:M\to N$ i.e. $\mathcal{F}=\{\Phi^{-1}(y)\}_{y\in N}$. Decompose the tangent bundle $TM$ into vertical and horizontal distributions
\begin{equation*}
TM=\mathcal{V}\oplus\mathcal{H},\quad\mathcal{V}=\ker\Phi_{\ast},\quad\mathcal{H}=\mathcal{V}^{\bot}, 
\end{equation*} 
where $\bot$ denotes the orthogonal complement with respect to Riemannian metric on $M$. Then $\Phi_{\ast x}:\mathcal{H}_x\to T_{\Phi(x)}N$, $x\in M$, is a linear isomorphism. Let $\Phi_{\ast x}^*:T_{\Phi(x)}N\to\mathcal{H}_x$ be an adjoint linear operator. The Jacobian $J\Phi$ of $\Phi$ is equal
\begin{equation*}
J\Phi(x)=\sqrt{\det(\Phi_{\ast x}\circ\Phi_{\ast x}^*:\mathcal{H}_x\to\mathcal{H}_x)},\quad x\in M.
\end{equation*}

The condition for existence of an extremal function for a foliation given by the level sets of a submersion takes the following form.

\begin{prop}[\cite{ci}]\label{p_extfunsub}
Let $\mathcal{F}$ be a foliation defined by a submersion $\Phi:M\to N$ such that $J\Phi<C$ for some constant $C$. Let $L_x$ denotes the leaf of $\mathcal{F}$ through $x\in M$ and put $\mathcal{F}_{\infty}=\{x\in M:\, \mu_{L_x}(L_x)=\infty \}$. Assume moreover $\mu_M(M)<\infty$. Then, there is an extremal function for $p$--modulus of $\mathcal{F}$ $($for any $p>1)$ if and only if $\mu_M(\mathcal{F}_{\infty})=0$.
\end{prop}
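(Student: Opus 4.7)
The plan is to derive this proposition from Proposition \ref{p_extfun}, so that the question becomes: does every subfamily $\mathcal{L}\subset\mathcal{F}$ with ${\rm mod}_p(\mathcal{L})=0$ necessarily have $\mu_M(\bigcup\mathcal{L})=0$? The essential tool throughout is the coarea formula for the submersion $\Phi$,
\begin{equation*}
\int_M h\, J\Phi\, d\mu_M = \int_N \left(\int_{\Phi^{-1}(y)} h\, d\mu_L\right)\, d\mu_N(y),
\end{equation*}
combined with the standing hypotheses $J\Phi<C$ and $\mu_M(M)<\infty$. Applying it with $h\equiv 1$ shows that $v(y):=\mu_L(\Phi^{-1}(y))$ lies in $L^1(N,\mu_N)$, so the set $N_\infty:=\{v=\infty\}$ is $\mu_N$-null; applying it with $h=1/J\Phi$ shows that $y\mapsto\int_{\Phi^{-1}(y)}(1/J\Phi)\, d\mu_L$ is also in $L^1(N,\mu_N)$. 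Both facts will be used to shuttle between $\mu_N$-null and $\mu_M$-null sets.

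For the implication ``$\mu_M(\mathcal{F}_\infty)>0\Rightarrow$ no extremal function'' I would test the criterion of Proposition \ref{p_extfun} on the subfamily $\mathcal{L}$ of all leaves of infinite measure, whose union is precisely $\mathcal{F}_\infty$ and therefore has positive $\mu_M$-measure. Setting $N_j:=\{y\in N: v(y)\geq j\}$, the functions $f_j:=j^{-1}\chi_{\Phi^{-1}(N_j)}$ lie in ${\rm adm}_p(\mathcal{L})$ (each infinite leaf is contained in $\Phi^{-1}(N_j)$, so $\int_L f_j=+\infty\geq 1$) and satisfy $\|f_j\|_p\leq j^{-1}\mu_M(M)^{1/p}\to 0$. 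Hence ${\rm mod}_p(\mathcal{L})=0$ while $\mu_M(\bigcup\mathcal{L})>0$, and Proposition \ref{p_extfun} excludes the existence of an extremal function.

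For the converse, assuming $\mu_M(\mathcal{F}_\infty)=0$, I would take a general $\mathcal{L}\subset\mathcal{F}$ with ${\rm mod}_p(\mathcal{L})=0$ and, discarding the $\mu_M$-null part $\bigcup\mathcal{L}\cap\mathcal{F}_\infty$, assume every $L\in\mathcal{L}$ has finite measure. Choosing $f_n\in{\rm adm}_p(\mathcal{L})$ with $\|f_n\|_p\to 0$, Hölder's inequality applied to $\int_L f_n\, d\mu_L\geq 1$ yields $\int_L f_n^p\, d\mu_L\geq v(y)^{1-p}$ on each leaf $L=\Phi^{-1}(y)\in\mathcal{L}$. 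Writing $N':=\{y\in N:\Phi^{-1}(y)\in\mathcal{L}\}$, coarea together with $J\Phi<C$ gives
\begin{equation*}
\int_{N'} v(y)^{1-p}\, d\mu_N \leq \int_N\!\int_{\Phi^{-1}(y)} f_n^p\, d\mu_L\, d\mu_N = \int_M f_n^p\, J\Phi\, d\mu_M \leq C\|f_n\|_p^p \longrightarrow 0,
\end{equation*}
forcing $\mu_N(N')=0$ since $v$ is finite and positive on $N'$. The step I expect to require the most care is converting this into $\mu_M(\Phi^{-1}(N'))=0$: here one reapplies the coarea formula with $h=\chi_{\Phi^{-1}(N')}/J\Phi$, which identifies $\mu_M(\Phi^{-1}(N'))$ with the integral over $N'$ of the $L^1(N,\mu_N)$-function $y\mapsto\int_{\Phi^{-1}(y)}(1/J\Phi)\, d\mu_L$; this integral vanishes because $\mu_N(N')=0$. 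The finiteness of $\mu_M(M)$ is precisely what makes this integrand $L^1$, so both standing hypotheses genuinely enter the proof.
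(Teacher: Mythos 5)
The paper itself contains no proof of this proposition---it is imported verbatim from the preprint \cite{ci}---so there is no in-text argument to compare against; I can only assess your proposal on its own terms, and it is correct. Reducing to Proposition \ref{p_extfun} and shuttling between $M$ and $N$ with the coarea formula is the natural route. In the forward direction your test functions work (in fact the sets $N_j$ are unnecessary: $f_j=j^{-1}\chi_M$ is already admissible for the family of infinite leaves, since $\int_L f_j=\infty$ there, and $\|f_j\|_p\to 0$ by $\mu_M(M)<\infty$). In the converse direction the H\"older bound $\int_L f_n^p\,d\mu_L\geq v(y)^{1-p}$ combined with $\int_M f_n^pJ\Phi\,d\mu_M\leq C\|f_n\|_p^p\to 0$ does force $\mu_N(N')=0$, and pulling back through the coarea formula with $h=\chi_{\Phi^{-1}(N')}/J\Phi$ gives $\mu_M(\Phi^{-1}(N'))=0$ (note that the integral of a nonnegative function over a $\mu_N$-null set vanishes in any case, so the $L^1$ observation, while a nice sanity check on where $\mu_M(M)<\infty$ enters, is not strictly needed at that step). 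The one point you should make explicit is the phrase ``for almost every $L$'' in the definition of $\mathrm{adm}_p$: each $f_n$ may violate $\int_L f_n\geq 1$ on an exceptional subfamily of $\mathcal{L}$, and if ``almost every'' is read as ``up to a subfamily of zero $p$-modulus'' your induction has no base case. This is repaired either by reading ``almost every'' as ``up to a subfamily with $\mu_M$-null union'' (then the countably many exceptional families can be discarded), or by invoking Fuglede's characterization of null families to replace the sequence $(f_n)$ by a single $f=\sum_n f_n\in L^p(M)$ with $\int_L f=\infty$ on all relevant leaves, after which the same coarea computation applies with no limiting argument. Similarly, measurability of $N'$ can be sidestepped by phrasing the conclusion via Fatou: $\liminf_n\int_{\Phi^{-1}(y)}f_n^p\,d\mu_L=0$ for $\mu_N$-a.e.\ $y$, while this $\liminf$ is at least $v(y)^{1-p}>0$ for every $y$ corresponding to a finite-measure leaf of $\mathcal{L}$.
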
  

There is an explicit formula for an extremal function in the case of a foliation given by a submersion. 

\begin{prop}[\cite{kp,ci}]\label{p_formulaextfun}
If $f_0$ is an extremal function for $p$--modulus of a foliation $\mathcal{F}$ given by the level sets of a submersion $\Phi:M\to N$, then
\begin{equation*}
f_0=\frac{(J\Phi)^{\frac{1}{p-1}}}{\widehat{(J\Phi)^{\frac{1}{p-1}}}}.
\end{equation*}
\end{prop}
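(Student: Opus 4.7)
My plan is to reduce the global minimization defining ${\rm mod}_p(\mathcal{F})$ to a family of leaf-wise one-dimensional problems via the coarea formula, and then solve each one by H\"older's inequality.

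First I would invoke the coarea formula for the submersion $\Phi$, which for any nonnegative measurable $\psi$ on $M$ reads
\begin{equation*}
\int_M \psi\cdot J\Phi\,d\mu_M=\int_N\int_{L_y}\psi\,d\mu_{L_y}\,d\mu_N(y).
\end{equation*}
Taking $\psi=f^p/J\Phi$ rewrites the objective as
\begin{equation*}
\|f\|_p^p=\int_N\int_{L_y}\frac{f^p}{J\Phi}\,d\mu_{L_y}\,d\mu_N(y).
\end{equation*}
Since the admissibility constraint is leaf-by-leaf and the objective is the $N$-integral of a nonnegative leaf integrand, the global minimization reduces to minimizing, on each leaf $L_y$ separately, the functional $\int_{L_y}f^p/J\Phi\,d\mu_{L_y}$ subject to $\int_{L_y}f\,d\mu_{L_y}=1$ (equality at the optimum being Proposition \ref{p_propext}(1)).

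On each leaf I would split $f=f(J\Phi)^{-1/p}\cdot (J\Phi)^{1/p}$ and apply H\"older's inequality with conjugate exponents $p$ and $p/(p-1)$, obtaining
\begin{equation*}
1=\int_{L_y}f\,d\mu_{L_y}\leq\left(\int_{L_y}\frac{f^p}{J\Phi}\,d\mu_{L_y}\right)^{1/p}\left(\int_{L_y}(J\Phi)^{1/(p-1)}\,d\mu_{L_y}\right)^{(p-1)/p}.
\end{equation*}
The equality case of H\"older forces $f^p/J\Phi\propto (J\Phi)^{1/(p-1)}$ pointwise on $L_y$, hence $f=c(y)(J\Phi)^{1/(p-1)}$, and the normalization $\int_{L_y}f\,d\mu_{L_y}=1$ fixes $c(y)=\widehat{(J\Phi)^{1/(p-1)}}(y)^{-1}$. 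Strict convexity of $\|\cdot\|_p^p$ for $p>1$ on the convex admissible set ensures the minimizer is unique, so this candidate must coincide with $f_0$, yielding the stated formula.

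The main obstacle I anticipate is not the variational step itself but the measurability and integrability bookkeeping: one must check that $\widehat{(J\Phi)^{1/(p-1)}}$ is positive and finite almost everywhere, so that the candidate is a well-defined measurable function on $M$ lying in $L^p(M)$, and that the exchange of ``$\inf$ over $f$'' with the base integral over $N$ is legitimate. These verifications rest on the standing hypotheses $J\Phi<C$ and $\mu_M(M)<\infty$ together with the existence criterion $\mu_M(\mathcal{F}_\infty)=0$ of Proposition \ref{p_extfunsub}; once established, direct substitution of the explicit formula into the coarea decomposition shows both that it lies in ${\rm adm}_p(\mathcal{F})$ and that it attains the lower bound produced by H\"older, which closes the argument.
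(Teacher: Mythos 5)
Your argument is correct, and it is essentially the classical one: the paper itself states Proposition \ref{p_formulaextfun} without proof, citing \cite{kp,ci}, and the proof in those sources proceeds exactly as you do --- disintegrate $\|f\|_p^p$ over the base via the coarea formula, minimize leafwise under the constraint $\int_L f\,d\mu_L=1$ by H\"older with exponents $p$ and $p/(p-1)$, and read off the extremal from the equality case. Your exponent bookkeeping checks out ($q/p=1/(p-1)$, equality forcing $f^p/J\Phi\propto(J\Phi)^{1/(p-1)}$, hence $f\propto(J\Phi)^{1/(p-1)}$ on each leaf), and the closing step via strict convexity of $f\mapsto\|f\|_p^p$ on the convex set ${\rm adm}_p(\mathcal{F})$ is the cleanest way to identify the candidate with $f_0$: it avoids any need to glue leafwise improvements into a global competitor. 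One point worth making explicit: the proposition as stated carries no hypotheses beyond the existence of $f_0$, but your candidate is only well defined where $0<\widehat{(J\Phi)^{1/(p-1)}}<\infty$; you correctly locate the needed finiteness in the hypotheses of Proposition \ref{p_extfunsub} ($J\Phi$ bounded, $\mu_M(M)<\infty$, $\mu_M(\mathcal{F}_\infty)=0$), so this is a looseness in the statement rather than a gap in your proof. An alternative route, closer in spirit to \cite{ci}, would be to feed arbitrary test functions $\varphi$ into the integral formula of Theorem \ref{t_intfor} and use the coarea formula on both sides to conclude that $f_0^{p-1}/J\Phi$ is constant along almost every leaf; that derivation presupposes Theorem \ref{t_intfor}, whereas yours is self-contained, which is a modest advantage.
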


\section{Admissible foliations}

Let $(M,g)$ be a Riemannian manifold, $p>1$. Let $X$ be a compactly supported vector field on $M$, $\varphi_t$ a flow of $X$. Let $\mathcal{F}$ be a foliation on $M$ and put $\mathcal{F}_t=\varphi_t(\mathcal{F})$.

We say that $X$ is {\it admissible} for $p$--modulus of $\mathcal{F}$ if, for some interval $I=(-\varepsilon,\varepsilon)$, we have
\begin{enumerate} 
\item[(A1)] there exists an extremal function $f_t$ for $p$--modulus of $\mathcal{F}_t$ for all $t\in I$,
\item[(A2)] the function $\alpha(x,t)=(f_t\circ\varphi_t)(x)$ is $C^1$--smooth with respect to variable $t\in I$,
\item[(A3)] there is $h_1\in L^p(M)$ such that $|\alpha(x,t)|<h_1(x)$ for all $t\in I$,
\item[(A4)] there is $h_2\in L^p(M)$ such that $|\frac{\partial\alpha}{\partial t}(x,t)|<h_2(x)$ for all $t\in I$.
\end{enumerate}

In addition, if every compactly supported vector field is admissible for $p$--modulus of $\mathcal{F}$, then we say that $\mathcal{F}$ is $p$--{\it admissible}.

The main result of this section is the following.

\begin{thm}\label{t_admiss}
Let $\mathcal{F}$ be a foliation on a Riemannian manifold $M$ given by the level sets of a submersion $\Phi:M\to N$. Assume 
\begin{enumerate}
\item $C_1<J\Phi<C_2$ and $\hat 1<C_3$ for some positive constants $C_1,C_2,C_3$, \item an extremal function for $p$--modulus of $\mathcal{F}$ is smooth $(p>1)$. 
\end{enumerate}
Then $\mathcal{F}$ is $p$--admissible.
\end{thm}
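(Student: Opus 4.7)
The plan is to verify the four admissibility conditions (A1)--(A4) for an arbitrary compactly supported vector field $X$ with flow $\varphi_t$ and support $K$. Since $\varphi_t$ is the identity off $K$, the perturbed foliation $\mathcal{F}_t = \varphi_t(\mathcal{F})$ is the level-set foliation of the submersion $\Phi_t := \Phi \circ \varphi_{-t} : M \to N$, with leaves $L_y^t = \varphi_t(L_y)$. Using joint smoothness of $J\Phi_t$ in $(x,t)$ and compactness of $K$, I would shrink the interval $I=(-\varepsilon,\varepsilon)$ so that hypothesis~(1) carries over uniformly in $t \in I$, i.e.\ $C_1' < J\Phi_t < C_2'$ on $M$ and $\hat 1_{t} < C_3'$ on $N$ for slightly weakened positive constants (here $\hat 1_{t}(y) = \mu_{L_y^t}(L_y^t)$).

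For (A1), Propositions~\ref{p_extfunsub} and~\ref{p_formulaextfun} then apply to $\mathcal{F}_t$: the condition $\mu_M(\mathcal{F}_{\infty}^{t})=0$ transfers from $\mathcal{F}$ via the diffeomorphism $\varphi_t$, yielding the explicit extremal function
\begin{equation*}
f_t = \frac{(J\Phi_t)^{1/(p-1)}}{\widehat{(J\Phi_t)^{1/(p-1)}}}.
\end{equation*}

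For (A2)--(A4), the identity $\Phi_t\circ\varphi_t=\Phi$ gives the convenient representation
\begin{equation*}
\alpha(x,t)=\frac{(J\Phi_t(\varphi_t(x)))^{1/(p-1)}}{g_t(\Phi(x))},\qquad g_t(y)=\int_{L_y}\bigl((J\Phi_t)\circ\varphi_t\bigr)^{1/(p-1)}J(\varphi_t|_{L_y})\,d\mu_{L_y},
\end{equation*}
where I have pulled back the denominator to the fixed leaf $L_y$. The integrand is smooth in $(z,t)$ and coincides with its $t=0$ value whenever $z\notin K$; combined with $\mu_{L_y}(L_y)\le C_3$, differentiation under the integral sign yields (A2). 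For (A3), the uniform bounds give $g_t(\Phi(x))\ge\mathrm{const}>0$ and $(J\Phi_t(\varphi_t(x)))^{1/(p-1)}\le C\,(J\Phi(x))^{1/(p-1)}$, whence $|\alpha(x,t)|\le C f_0(x)$ and $h_1=C f_0\in L^p(M)$. For (A4), differentiating the quotient produces a numerator term supported in $K$ (its integrand's $t$-derivative vanishes off $K$) and a denominator term of the form $-\alpha(x,t)\,\partial_t g_t(\Phi(x))/g_t(\Phi(x))$; since $|\partial_t g_t(y)|\le C\,\mu_{L_y}(L_y\cap K)\le C\cdot C_3$, both terms are dominated by $h_2=C' f_0 + C''\chi_K\in L^p(M)$.

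The main obstacle is the uniform $L^p$-domination in (A3)--(A4) at points $x$ far from $K$: although $x$ itself is unperturbed, the leaf $L_{\Phi(x)}$ may still cross $K$, so $g_t(\Phi(x))$ is genuinely $t$-dependent. The quantitative bounds in hypothesis~(1) are essential rather than cosmetic, since they simultaneously supply a uniform positive lower bound for $g_t$ and the uniform measure control $\mu_{L_y}(L_y\cap K)\le C_3$, which together keep the ratio $\alpha/f_0$ bounded globally on $M$. The smoothness assumption on $f_0$ enters only in making the representation above smooth in $t$, hence guaranteeing the $C^1$ regularity demanded by (A2).
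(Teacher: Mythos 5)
Your proposal follows essentially the same route as the paper: it establishes (A1) via Propositions~\ref{p_extfunsub} and~\ref{p_formulaextfun}, rewrites $\alpha(x,t)=(f_t\circ\varphi_t)(x)$ as the same quotient with the denominator integral pulled back to the fixed leaf via the tangential Jacobian $J^{\top}\varphi_t$, differentiates under the integral sign by dominated convergence for (A2), and dominates $\alpha$ and $\partial_t\alpha$ by a constant multiple of $f_0$ (the paper's Step~3) using the uniform bounds from hypothesis~(1). The argument is correct and matches the paper's proof in all essentials.
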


\begin{proof}
Let $X$ be compactly supported vector field on $M$ and let $\varphi_t$ be a flow of $X$. Put $\mathcal{F}_t=\varphi_t(\mathcal{F})$. Then $\mathcal{F}_t$ is given by the level sets of the submersion $\Phi_t=\Phi\circ\varphi_t^{-1}:M\to N$. Moreover
\begin{equation*}
J\Phi_t=J\Phi\cdot J^{\bot}\varphi_t^{-1},
\end{equation*}
where $J^{\bot}\varphi_t^{-1}$ is a smooth function depending only on differential $\varphi_{t\ast}$ of the map $\varphi_t$. 

Let $L^t_z$ denotes the leaf of $\mathcal{F}_t$ through $z\in M$ i.e.
\begin{equation*}
L^t_z=\Phi_t^{-1}(\Phi_t(z)),\quad z\in M.
\end{equation*}
We divide the proof into few steps.

{\bf Step 1} -- there exist an extremal function $f_t$ for $p$--modulus of $\mathcal{F}_t$.\\ 
Since $C_1<J\Phi<C_2$ and $\varphi_t$ is a flow of compactly supported vector field then the Jacobian $J\Phi_t$ is bounded. Since $\mu_M(\mathcal{F}_{\infty})=0$, then $\mu_M((\mathcal{F}_t)_{\infty})=0$, hence, by Proposition \ref{p_extfunsub}, there exists an extremal function $f_t$ for $p$--modulus of $\mathcal{F}_t$. By Proposition \ref{p_formulaextfun} we have
\begin{gather}\label{eq_funeksft}
\begin{split}
f_t(z) &=\frac{(J\Phi_t)^{\frac{1}{p-1}}(z)}{\int_{L^t_z}(J\Phi_t)^{\frac{1}{p-1}}\,d\mu_{L^t_z}}\\
&=\frac{(J\Phi\circ \varphi_t^{-1}(z))^{\frac{1}{p-1}}(J^{\bot}\varphi_t^{-1}(z))^{\frac{1}{p-1}}}
{\int_{L^0_{\varphi_t^{-1}(z)}}(J\Phi)^{\frac{1}{p-1}}((J^{\bot}\varphi_t^{-1})\circ\varphi_t)^{\frac{1}{p-1}}J^{\top}\varphi_t \,d\mu_{L^0_{\varphi_t^{-1}(z)}}}.
\end{split}
\end{gather}

{\bf Step 2} -- function $t\mapsto \alpha(x,t)=(f_t\circ\varphi_t)(x)$ is $C^1$--smooth.\\ 
By \eqref{eq_funeksft} 
\begin{equation}\label{eq_ftfit}
(f_t\circ\varphi_t)(x)=\frac{(J\Phi(x))^{\frac{1}{p-1}}(J^{\bot}\varphi_t^{-1}(\varphi_t(x)))^{\frac{1}{p-1}}}
{\int_{L^0_x}(J\Phi)^{\frac{1}{p-1}}((J^{\bot}\varphi_t^{-1})\circ\varphi_t)^{\frac{1}{p-1}}J^{\top}\varphi_t \,d\mu_{L^0_x}}
\end{equation}
Functions
\begin{equation*}
t\mapsto\beta_1(x,t)=(J^{\bot}\varphi_t^{-1}(\varphi_t(x)))^{\frac{1}{p-1}}
\end{equation*}
and
\begin{equation*}
t\mapsto\beta_2(x,t)=((J^{\bot}\varphi_t^{-1})\circ\varphi_t)^{\frac{1}{p-1}}J^{\top}\varphi_t
\end{equation*}
are smooth and positive. Since $X$ is compactly supported, for any closed interval $I$ containing $0\in\mathbb{R}$, functions
\begin{equation*}
\beta_1(x,t),\quad \frac{\partial\beta_1}{\partial t}(x,t),\quad \beta_2(x,t),\quad \frac{\partial\beta_2}{\partial t}(x,t),\quad (x,t)\in M\times I
\end{equation*}
are bounded. By Lebesgue dominated convergence theorem, function $I\ni t\mapsto\alpha(x,t)$ is differentiable. Analogously, we show that this function is twice differentiable, hence is $C^1$--smooth.

{\bf Step 3} -- $|f_t\circ\varphi_t|<Cf_0$ and $|\frac{d}{dt}(f_t\circ\varphi_t)|<Cf_0$ for some $C>0$.\
By \eqref{eq_ftfit}
\begin{equation*}
|(f_t\circ\varphi_t)(x)|=\left|\frac{(J\Phi)^{\frac{1}{p-1}}\beta_1(x,t)}
{\int_{L_0^x}(J\Phi)^{\frac{1}{p-1}}\beta_2(x,t)\,d\mu_{L^x_0}}\right|\leq C\frac{(J\Phi)^{\frac{1}{p-1}}}{\int_{L^0_x}(J\Phi)^{\frac{1}{p-1}}d\mu_{L^0_x}}= Cf_0
\end{equation*}
and
\begin{align*}
\left|\frac{d}{dt}\left( f_t\circ\phi_t \right)\right| &=\left| \frac{(J\Phi)^{\frac{1}{p-1}}\frac{\partial\beta_1}{\partial t}(x,t)}
{\int_{L^0_x}(J\Phi)^{\frac{1}{p-1}}\beta_2(x,t)\, d\mu_{L^0_x}}\right.\\
&-\left.\frac{(J\Phi)^{\frac{1}{p-1}}\beta_1(x,t)\cdot \int_{L^0_x}(J\Phi)^{\frac{1}{p-1}}\frac{\partial\beta_2}{\partial t}(x,t)\, d\mu_{L^0_x}}{\left(\int_{L^0_x}(J\Phi)^{\frac{1}{p-1}}\beta_2(x,t)\, d\mu_{L^0_x}\right)^2} \right|\\
&\leq C'\left|\frac{(J\Phi)^{\frac{1}{p-1}}}{\int_{L^0_x}(J\Phi)^{\frac{1}{p-1}}d\mu_{L^0_x}}\right|+
C''\left| \frac{(J\Phi)^{\frac{1}{p-1}}\int_{L^0_x}(J\Phi)^{\frac{1}{p-1}}d\mu_{L^0_x}}{\left(\int_{L^0_x}(J\Phi)^{\frac{1}{p-1}}d\mu_{L^0_x}\right)^2} \right|\\
&= C f_0
\end{align*}

{\bf Step 4} -- $\inf f_0>0$.\\
Follows from the fact that $C_1<J\Phi<C_2$ and $\hat 1<C_3$ i.e.
\begin{equation*}
f_0\geq \frac{C_1^{\frac{1}{p-1}}}{C_2^{\frac{1}{p-1}}C_3}>0.
\end{equation*} 
\end{proof}

By above theorem we get immediately the following corollary.

\begin{cor}\label{c_admiss}
Let $\mathcal{F}$ be a foliation given by the level sets of a submersion $\Phi:M\to N$, where $M$ is compact. Assume an extremal function for $p$--modulus of $\mathcal{F}$ is smooth. Then $\mathcal{F}$ is $p$--admissible.
\end{cor}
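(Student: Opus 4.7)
The plan is to reduce the corollary to Theorem~\ref{t_admiss}. Smoothness of the extremal function is already among the hypotheses of the corollary, so it suffices to verify condition~(1) of that theorem, namely the bounds $C_1<J\Phi<C_2$ and $\hat 1<C_3$. I expect both to follow directly from the compactness of $M$.

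For the first bound, $J\Phi(x)=\sqrt{\det(\Phi_{\ast x}\circ\Phi_{\ast x}^{\ast})}$ is a smooth function on $M$ that is strictly positive everywhere, because $\Phi$ is a submersion and $\Phi_{\ast x}\circ\Phi_{\ast x}^{\ast}:\mathcal{H}_x\to\mathcal{H}_x$ is positive definite. Compactness of $M$ then forces $J\Phi$ to attain a positive minimum and a finite maximum, and any positive constants below this minimum and above this maximum serve as $C_1$ and $C_2$.

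For the second bound, I note that $\hat 1(x)=\mu_{L_x}(L_x)$ depends only on the leaf containing $x$, so it descends to a function on $\Phi(M)\subset N$. Since $M$ is compact, $\Phi$ is a proper submersion, and by Ehresmann's fibration theorem $\Phi\colon M\to\Phi(M)$ is a locally trivial smooth fiber bundle over the compact set $\Phi(M)$. Consequently, every leaf $\Phi^{-1}(y)$ is a compact submanifold and the induced metrics vary smoothly in the base parameter, so the leaf-volume function $y\mapsto\mu_{\Phi^{-1}(y)}(\Phi^{-1}(y))$ is continuous on the compact set $\Phi(M)$. It therefore attains a finite maximum, which furnishes the constant $C_3$ with $\hat 1<C_3$ on $M$.

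With these bounds together with smoothness of the extremal function, both hypotheses of Theorem~\ref{t_admiss} are satisfied, and applying that theorem directly yields $p$-admissibility of $\mathcal{F}$. The only non-routine step is the continuity (and boundedness) of the leaf-volume function, which I would handle via Ehresmann's theorem as above; everything else is a straightforward consequence of compactness.
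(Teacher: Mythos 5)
Your proposal is correct and follows exactly the route the paper intends: the paper gives no written proof, merely asserting that the corollary follows immediately from Theorem~\ref{t_admiss}, and your argument supplies the compactness details (positivity and continuity of $J\Phi$, and boundedness of the leaf-volume function $\hat 1$ via Ehresmann's fibration theorem) that make that reduction rigorous.
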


\section{Variation of modulus}

In this section, we consider the variation of $p$--modulus under the flow of compactly supported vector field. The formula for a variation implies some results about an extremal function.

Let $\mathcal{F}$ be a $k$--dimensional foliation on a Riemannian manifold $(M,g)$. Denote by ${\rm div}_{\mathcal{F}}X$ the divergence of a vector field $X\in\Gamma(TM)$ with respect to leaves of $\mathcal{F}$ i.e.
\begin{equation*}
{\rm div}_{\mathcal{F}}X=\sum_{i=1}^k g(\nabla_{e_i}X,e_i),
\end{equation*}    
where $e_1,\ldots,e_k$ is a local orthonormal basis of $T\mathcal{F}$ and $\nabla$ the Levi--Civita connection on $M$. Let $H_{\mathcal{F}}$ and $H_{\mathcal{F}^{\bot}}$ denote the mean curvatures of $\mathcal{F}$ and the distribution $\mathcal{F}^{\bot}$ orthogonal to $\mathcal{F}$, respectively. If $X$ is tangent to $\mathcal{F}$ then the divergence ${\rm div}_MX$ on $M$ and the divergence on the leaves ${\rm div}_{\mathcal{F}}X$ are related as follows
\begin{equation}\label{eq_divmf}
{\rm div}_MX={\rm div}_{\mathcal{F}}X-g(X,H_{\mathcal{F}^{\bot}}),\quad X\in\Gamma(T\mathcal{F}).
\end{equation}

Moreover, if $\varphi_t$ is a flow of a vector field $X\in\Gamma(TM)$ and $\mathcal{F}_t=\varphi_t(\mathcal{F})$, then
\begin{equation}\label{eq_divflow}
\frac{d}{dt}J^{\top}\varphi_t(x)_{t=t_0}=J^{\top}\varphi_{t_0}(x){\rm div}_{\mathcal{F}_{t_0}}X,
\end{equation}
where $J^{\top}\varphi_t$ is the Jacobian of $\varphi_t$ restricted to leaves of $\mathcal{F}$. In particular,
\begin{equation*}
\frac{d}{dt}J^{\top}\varphi_t(x)_{t=0}={\rm div}_{\mathcal{F}}X.
\end{equation*}

\begin{thm}\label{t_variation}
Let $\mathcal{F}$ be a foliation on a Riemannian manifold $(M,g)$. Let $X$ be compactly supported vector field on $M$, which is admissible for $p$--modulus of $\mathcal{F}$. Assume there exists smooth extremal function $f_0$ for $p$--modulus of $\mathcal{F}$. Then, the following formula holds
\begin{equation}\label{eq_variation}
\frac{d}{dt}{\rm mod}_p(\mathcal{F}_t)^p_{t=0}= -p\int_M f_0^{p-1}\left(g(\nabla f_0,X)+ f_0\rm div_{\mathcal{F}}X\right)d\mu_M.
\end{equation} 
\end{thm}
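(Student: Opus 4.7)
The plan is to differentiate the $p$--th power of the modulus, which equals $\int_M f_t^p\,d\mu_M$, by pulling everything back along $\varphi_t$ so the integration domain stays fixed.

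First, I would change variables to write
\begin{equation*}
{\rm mod}_p(\mathcal{F}_t)^p=\int_M f_t^p\,d\mu_M=\int_M \alpha(x,t)^p\,J\varphi_t(x)\,d\mu_M(x),
\end{equation*}
where $J\varphi_t$ is the full Jacobian of $\varphi_t$. The admissibility conditions (A2)--(A4) and compact support of $X$ allow differentiation under the integral via Lebesgue dominated convergence. Using $\alpha(\cdot,0)=f_0$, $J\varphi_0=1$, and $\frac{d}{dt}J\varphi_t|_{t=0}={\rm div}_M X$ yields
\begin{equation*}
\frac{d}{dt}{\rm mod}_p(\mathcal{F}_t)^p\Big|_{t=0}=p\int_M f_0^{p-1}\dot\alpha(x,0)\,d\mu_M+\int_M f_0^p\,{\rm div}_M X\,d\mu_M,
\end{equation*}
where $\dot\alpha(\cdot,0)=\frac{\partial\alpha}{\partial t}|_{t=0}$.

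Second, I would extract information about $\dot\alpha(\cdot,0)$ from the defining extremality of $f_t$. By Proposition \ref{p_propext}(1), $\int_{L^t_z}f_t\,d\mu_{L^t_z}=1$ almost everywhere, and pulling this back along $\varphi_t$ gives
\begin{equation*}
\int_{L^0_x}\alpha(x',t)\,J^\top\varphi_t(x')\,d\mu_{L^0_x}(x')=1
\end{equation*}
for almost every $x\in M$. Differentiating this identity at $t=0$ and invoking \eqref{eq_divflow} produces the key relation
\begin{equation*}
\widehat{\dot\alpha(\cdot,0)}(x)=-\widehat{f_0\,{\rm div}_{\mathcal{F}}X}(x).
\end{equation*}

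Third, I would apply the integral formula of Theorem \ref{t_intfor} twice: once to $\varphi=\dot\alpha(\cdot,0)$ and once to $\varphi=f_0\,{\rm div}_\mathcal{F} X$ (their required essential boundedness follows from smoothness of $f_0$, compact support of $X$, and the admissibility controls). This chain gives
\begin{equation*}
\int_M f_0^{p-1}\dot\alpha(\cdot,0)\,d\mu_M=\int_M f_0^p\widehat{\dot\alpha(\cdot,0)}\,d\mu_M=-\int_M f_0^p\,{\rm div}_\mathcal{F} X\,d\mu_M,
\end{equation*}
so the derivative becomes $\int_M f_0^p\,{\rm div}_M X\,d\mu_M-p\int_M f_0^p\,{\rm div}_\mathcal{F} X\,d\mu_M$. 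Finally, since $X$ has compact support, integration by parts $\int_M{\rm div}_M(f_0^p X)\,d\mu_M=0$ converts the first summand into $-p\int_M f_0^{p-1}g(\nabla f_0,X)\,d\mu_M$, which combined with the second summand is exactly \eqref{eq_variation}.

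The main obstacle is the middle part: justifying that $\dot\alpha(\cdot,0)$ belongs to the class in which Theorem \ref{t_intfor} applies, and that one may differentiate under the \emph{leafwise} integral in the normalization identity. The bounds established in Steps 2--4 of the proof of Theorem \ref{t_admiss}, together with smoothness of $f_0$, ought to handle this, but the careful accounting of essential suprema of $\dot\alpha$ and $\widehat{\dot\alpha}$ is the delicate technical point.
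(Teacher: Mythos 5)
Your proposal is correct and follows essentially the same route as the paper: change of variables via $\varphi_t$, differentiation under the integral by dominated convergence using (A1)--(A4), differentiation of the leafwise normalization $\int_{L_t}f_t\,d\mu_{L_t}=1$ together with \eqref{eq_divflow}, two applications of Theorem \ref{t_intfor}, and the divergence theorem. The only cosmetic difference is that you work with $\dot\alpha(\cdot,0)$ directly instead of splitting it as $g(\nabla f_0,X)+(\frac{df_t}{dt})_{t=0}$ via Lemma \ref{l_technical}, and the technical point you flag (essential boundedness of $\dot\alpha(\cdot,0)$ and its leafwise integral) is handled no more carefully in the paper's own argument.
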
 

Before we prove above theorem we will need the following technical lemma.
\begin{lem}\label{l_technical}
Let $h:M\times\mathbb{R}^2\to\mathbb{R}$ be defined as follows $h(x,t,s)=(f_s\circ\varphi_t)(x)$.
Then
\begin{equation*}
\frac{\partial h}{\partial x}(x,t,s)=g(\nabla f_s(\varphi_t(x)),X_{\varphi_t(x)}),\quad \frac{\partial h}{\partial s}(x,t,s)=\frac{df_s}{ds}(\varphi_t(x)).
\end{equation*}
In particular,
\begin{equation}\label{eq_derivativeftfit}
\frac{d}{dt}(f_t\circ\varphi_t)(x)=g(\nabla f_t(\varphi_t(x)),X_{\varphi_t(x)})+\frac{df_t}{dt}(\varphi_t(x)).
\end{equation}
\end{lem}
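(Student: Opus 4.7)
The plan is to apply the chain rule to the composition $h(x,t,s)=(f_s\circ\varphi_t)(x)$ in each variable separately, and then combine the two partials to get the total derivative along the diagonal $s=t$. The smoothness required is already guaranteed: $f_s$ is $C^1$ in $s$ by assumption (A2) in the definition of admissibility and smooth on $M$ by hypothesis of Theorem \ref{t_variation}, while $\varphi_t$ is smooth in $(x,t)$ as the flow of the compactly supported field $X$. (Note also that the first displayed derivative in the statement is with respect to $t$, not $x$; the symbol $x$ is a fixed footpoint.)

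First I would compute $\partial h/\partial t$. Fixing $s$, one has $h(x,t,s)=f_s(\varphi_t(x))$, so by the chain rule
\begin{equation*}
\frac{\partial h}{\partial t}(x,t,s)=df_s\bigl|_{\varphi_t(x)}\!\Bigl(\tfrac{d}{dt}\varphi_t(x)\Bigr).
\end{equation*}
Since $\varphi_t$ is the flow of $X$, we have $\tfrac{d}{dt}\varphi_t(x)=X_{\varphi_t(x)}$, and converting the differential to the metric gives $df_s(Y)=g(\nabla f_s,Y)$. This yields the first claimed formula.

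Next I would compute $\partial h/\partial s$. Here $\varphi_t(x)$ does not depend on $s$, so we are merely differentiating the one-parameter family $s\mapsto f_s$ pointwise at $\varphi_t(x)$, giving $\tfrac{\partial h}{\partial s}(x,t,s)=\tfrac{df_s}{ds}(\varphi_t(x))$.

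Finally, formula \eqref{eq_derivativeftfit} is the total derivative of $h$ along the diagonal $\{s=t\}$. Writing $(f_t\circ\varphi_t)(x)=h(x,t,t)$ and applying the multivariable chain rule,
\begin{equation*}
\frac{d}{dt}h(x,t,t)=\frac{\partial h}{\partial t}(x,t,t)+\frac{\partial h}{\partial s}(x,t,t),
\end{equation*}
and substituting the two expressions just obtained yields \eqref{eq_derivativeftfit}. The only subtle point is checking that $h$ is jointly $C^1$ in $(t,s)$ near the diagonal so that the total-derivative identity applies; this follows from the joint smoothness of $\varphi_t$ in $(x,t)$ together with the $C^1$ dependence of $f_s$ on $s$ guaranteed by (A2), so no real obstacle arises.
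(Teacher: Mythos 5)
Your proof is correct and follows essentially the same route as the paper: both are a chain-rule computation in each variable separately (the paper phrases it via the pushforwards of $F(x,s)=f_s(x)$ and $\varphi(x,t)=\varphi_t(x)$, you write the differentials directly), followed by summing the two partials along the diagonal $s=t$. You also correctly note the typo in the statement (the first partial is with respect to $t$, not $x$) and make explicit the joint $C^1$ regularity needed for the diagonal total-derivative step, which the paper leaves implicit.
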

\begin{proof}
Consider two maps $F:M\times \mathbb{R}\to\mathbb{R}$ and $\varphi:M\times\mathbb{R}\to\mathbb{R}$,
\begin{equation*}
F(x,s)=f_s(x),\quad \varphi(x,t)=\varphi_t(x).
\end{equation*}
Then $h=F\circ(\varphi,{\rm id}_{\mathbb{R}})$. Hence
\begin{equation*}
\frac{\partial h}{\partial t}=h_{\ast}\frac{d}{dt}=F_{\ast}(\varphi_{\ast}\frac{d}{dt},0)=F_{\ast}(X,0)=f_{s\ast}X=g(\nabla f_s,X)
\end{equation*}
and
\begin{equation*}
\frac{\partial h}{\partial t}=h_{\ast}\frac{d}{ds}=F_{\ast}(0,\frac{d}{ds})=\frac{df_s}{ds}.
\end{equation*}
\end{proof}

\begin{proof}[Proof of Theorem \ref{t_variation}] 
Since $X$ is admissible for $p$--modulus of $\mathcal{F}$, then there are functions $h_1,h_2\in L^p(M)$ such that conditions (A1) and (A2) hold. By Lemma \ref{l_technical} and \eqref{eq_divflow} we have
\begin{align*}
|\frac{d}{dt}\left(\left(f_t\circ \varphi_t\right)J^\top\varphi_t \right)| &=|\frac{d}{dt}\left(f_t\circ \varphi_t\right)J^{\top}\varphi_t+\left(f_t\circ\varphi_t\right)J^{\top}\varphi_t\rm div_{\mathcal{F}}X|\\
&\leq |\frac{d}{dt}\left(f_t\circ \varphi_t\right)||J^{\top}\varphi_t|+|f_t\circ\varphi_t||J^{\top}\varphi_t||\rm div_{\mathcal{F}}X|\\
&\leq C_1h_2+C_1C_2h_1,
\end{align*}
where $J^{\top}\varphi_t<C_1$, $t\in I$, and ${\rm div}_{\mathcal{F}}X<C_2$. Function $h=C_1h_2+C_1C_2h_1$ is in $L^p(M)$, hence, the existence of extremal function $f_0$ implies that $h\in L^1(L)$ for almost every leaf $L\in \mathcal{F}$ (Proposition \ref{p_propext}). By Lebesgue dominated convergence theorem, Lemma \ref{l_technical} and \eqref{eq_divflow} for any $L\in\mathcal{F}$ we have
\begin{align*}
\frac{d}{dt}\left(\int_L (f_t\circ\varphi_t)J^{\top}\varphi_t\,d\mu_L\right)_{t=0} &=\int_L \frac{d}{dt}((f_t\circ\varphi_t)J^{\top}\varphi_t)_{t=0}\,d\mu_L \\
&=\int_L (g(\nabla f_0,X)+(\frac{df_t}{dt})_{t=0}+f_0{\rm div}_{\mathcal{F}}X)\,d\mu_L.
\end{align*}
Let $L_t$ denotes the leaf of $\mathcal{F}_t$ i.e. $L_t=\varphi_t(L)$, $L\in\mathcal{F}$. Since, by Proposition \ref{p_propext}, $\int_{L_t}f_t\,d\mu_{L_t}=1$, then
\begin{equation*}
0=\frac{d}{dt}\left( \int_{L_t}f_t\,d\mu_{L_t} \right)_{t=0}=\frac{d}{dt}\left( \int_L (f_t\circ\varphi_t)J^{\top}\varphi_t\, d\mu_L \right)_{t=0}.
\end{equation*}
Hence, by above,
\begin{equation*}
\int_L (\frac{df_t}{dt})_{t=0}\, d\mu_L=-\int_L (g(\nabla f_0,X)+f_0{\rm div}_{\mathcal{F}}X)\,d\mu_L.
\end{equation*}
Notice that
\begin{align*}
\int_L (\frac{df_t}{dt})_{t=0}\,d\mu_L &=\int_L \frac{d}{dt}(f_t\circ\varphi_t\circ\varphi^{-1}_t)_{t=0}\,d\mu_L \\
&=\int_L\frac{d}{dt}(f_t\circ\varphi_t)\frac{d}{dt}(\varphi_t^{-1})_{t=0}\,d\mu_L \\
&\leq C\int_L h_1\,d\mu_L
\end{align*}
for some constant $C>0$. Since ${\rm esssup}|\widehat{h_1}|<\infty$, it follows that \begin{equation*}
{\rm esssup}|\widehat{(\frac{df_t}{dt})_{t=0}}|<\infty.
\end{equation*} 
Hence we may use Theorem \ref{t_intfor} for $\varphi=(\frac{df_t}{dt})_{t=0}$. Moreover, we have
\begin{equation*}
\int_M f_t^p\,d\mu_M=\int_{\varphi_t(M)}f_t^p\,d\mu_M=\int_M (f_t\circ\varphi_t)^pJ\varphi_t\,d\mu_M
\end{equation*}
and
\begin{align*}
|\frac{d}{dt}((f_t\circ\varphi_t)^pJ\varphi_t)| &=|p(f_t\circ\varphi_t)^{p-1}\frac{d}{dt}(f_t\circ\varphi_t)J\varphi_t+(f_t\circ\varphi_t)^p\frac{d}{dt}(J\varphi_t)|\\
&\leq C'h_1^{p-1}h_2+C''h_1^p
\end{align*}
for some constants $C',C''>0$. By H\" older inequality function $h_1^{p-1}h_2$ is integrable on $M$, hence function $C'h_1^{p-1}h_2+C''h_1^p$ is integrable on $M$. Thus, by Lebesgue dominated convergence theorem and Lemma \ref{l_technical}
\begin{align*}
\frac{d}{dt}{\rm mod}_p^p(\mathcal{F}_t)_{t=0}&=\frac{d}{dt}\left(\int_M f_t^p\,d\mu_M\right)_{t=0}\\
&=\frac{d}{dt}\left( \int_M (f_t\circ \varphi_t)^pJ\varphi_t\,d\mu_L \right)_{t=0}\\
&=\int_M\frac{d}{dt}( (f_t\circ\varphi_t)^pJ\varphi_t)_{t=0}\,d\mu_M\\
&=\int_M( pf_0^{p-1}\frac{d}{dt}(f_t\circ\varphi_t)_{t=0}+f_0^p{\rm div} X )\,d\mu_M\\
&=\int_M ( pf_0^{p-1}( g(\nabla f_0,X)+(\frac{df_t}{dt})_{t=0} )+f_0^p{\rm div}X )\,d\mu_M\\
&=\int_M(pf_0^{p-1}g(\nabla f_0,X)+f_0^p{\rm div}X)\,d\mu_M+p\int_M f_0^{p-1}(\frac{df_t}{dt})_{t=0}\,d\mu_M\\
&=\int_M {\rm div}(f_0^p X)+p\int_M f_0^{p-1}(\frac{df_t}{dt})_{t=0}\,d\mu_M\\
&=p\int_M f_0^{p-1}(\frac{df_t}{dt})_{t=0}\,d\mu_M.
\end{align*}
Finally, by Theorem \ref{t_intfor}, we obtain
\begin{align*}
\frac{d}{dt}{\rm mod}_p^p(\mathcal{F}_t)_{t=0}&=p\int_M f_0^{p}\widehat{(\frac{df_t}{dt})_{t=0}}\,d\mu_M\\
&=-p\int_M f_0^p(\widehat{g(\nabla  f_0,X)+ f_0\rm {div}_{\mathcal{F}}X})\,d\mu_M\\
&=-p\int_M f_0^{p-1}(g(\nabla f_0,X)+ f_0{\rm div}_{\mathcal{F}}X)\,d\mu_M. 
\end{align*}
\end{proof}

Variation of modulus implies the condition for tangent gradient of an extremal function (compare Corollary 4.4 \cite{ci})

\begin{cor}\label{c_nablabotextfun}
Let $\mathcal{F}$ be a foliation on a Riemannian manifold $(M,g)$. Assume all compactly supported vector fields $X$ tangent to $\mathcal{F}$ are admissible for $p$--modulus of $\mathcal{F}$. Then
\begin{equation*}
\nabla^{\top}(\log f_0)=\frac{1}{p-1}H_{\mathcal{F}^{\bot}}.
\end{equation*} 
\end{cor}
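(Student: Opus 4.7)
The plan is to exploit the fact that the flow of a vector field tangent to $\mathcal{F}$ leaves each leaf invariant as a set, so in the setting of Theorem \ref{t_variation} the left--hand side of \eqref{eq_variation} vanishes identically for such $X$. Concretely, if $X\in\Gamma(T\mathcal{F})$ is compactly supported and $\varphi_t$ is its flow, then $\varphi_t(L)=L$ for every leaf $L\in\mathcal{F}$, hence $\mathcal{F}_t=\mathcal{F}$ for all $t$, and therefore ${\rm mod}_p(\mathcal{F}_t)={\rm mod}_p(\mathcal{F})$ is constant in $t$. By the admissibility hypothesis on all compactly supported tangent $X$ and Theorem \ref{t_variation}, this yields
\begin{equation*}
\int_M f_0^{p-1}\bigl(g(\nabla f_0,X)+f_0\,{\rm div}_{\mathcal{F}}X\bigr)\,d\mu_M=0
\quad\text{for every }X\in\Gamma_{\rm c}(T\mathcal{F}).
\end{equation*}

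Next I would rewrite ${\rm div}_{\mathcal{F}}X$ in terms of the ambient divergence via \eqref{eq_divmf}, obtaining $f_0\,{\rm div}_{\mathcal{F}}X=f_0\,{\rm div}_M X+f_0\,g(X,H_{\mathcal{F}^{\bot}})$. Since $X$ is tangent to $\mathcal{F}$, we also have $g(\nabla f_0,X)=g(\nabla^{\top} f_0,X)$. Substituting and performing a standard integration by parts on the $f_0^{p-1}f_0\,{\rm div}_M X=f_0^p\,{\rm div}_M X$ term (possible because $X$ is compactly supported), I get
\begin{equation*}
\int_M f_0^p\,{\rm div}_M X\,d\mu_M=-p\int_M f_0^{p-1}g(\nabla f_0,X)\,d\mu_M=-p\int_M f_0^{p-1}g(\nabla^{\top} f_0,X)\,d\mu_M.
\end{equation*}
Combining with the previous identity yields
\begin{equation*}
\int_M f_0^{p-1}\,g\!\left((1-p)\nabla^{\top} f_0+f_0 H_{\mathcal{F}^{\bot}},\,X\right)d\mu_M=0
\end{equation*}
for every compactly supported $X\in\Gamma(T\mathcal{F})$. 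Since $H_{\mathcal{F}^{\bot}}$ is tangent to $\mathcal{F}$ (the mean curvature of $\mathcal{F}^{\bot}$ lies in $(\mathcal{F}^{\bot})^{\bot}=T\mathcal{F}$) and $f_0>0$ by Proposition \ref{p_propext}, the standard variational argument forces the tangential vector $(1-p)\nabla^{\top} f_0+f_0 H_{\mathcal{F}^{\bot}}$ to vanish pointwise. Dividing by $(p-1)f_0$ and recognizing $\nabla^{\top} f_0/f_0=\nabla^{\top}(\log f_0)$ gives the claim.

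The only real obstacle is the routine justification of the integration by parts and of the fundamental--lemma--of--calculus--of--variations step in a tangent subbundle; both rest on the assumption that every compactly supported tangent vector field is admissible, so that the variation formula \eqref{eq_variation} may be applied to the full class of test fields $X\in\Gamma_{\rm c}(T\mathcal{F})$. Everything else is algebraic manipulation of the identity produced by Theorem \ref{t_variation} combined with \eqref{eq_divmf}.
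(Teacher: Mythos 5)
Your proof is correct and follows essentially the same route as the paper's: both exploit that a compactly supported $X\in\Gamma(T\mathcal{F})$ generates a flow preserving $\mathcal{F}$, apply Theorem \ref{t_variation}, rewrite ${\rm div}_{\mathcal{F}}X$ via \eqref{eq_divmf}, integrate by parts on the $f_0^p\,{\rm div}_MX$ term, and conclude by the arbitrariness of $X$ in $\Gamma(T\mathcal{F})$. The signs and the observation that $H_{\mathcal{F}^{\bot}}$ is tangent to $\mathcal{F}$ are all handled correctly.
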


\begin{proof}
Let $X\in\Gamma(T\mathcal{F})$ be compactly supported. Then the flow $\varphi_t$ of $X$ maps $\mathcal{F}$ to $\mathcal{F}$, hence $\mathcal{F}_t=\mathcal{F}$ for all $t$. Thus, by Theorem \ref{t_variation} and formula \ref{eq_divmf}, we have
\begin{align*}
0&=-p\int_M f_0^{p-1}(g(\nabla f_0,X)+ f_0 {\rm div}_{\mathcal{F}}X)\,d\mu_M\\
&=-p\int_M f_0^{p-1}(g(\nabla  f_0,X)+ f_0({\rm div}_MX+g(H_{\mathcal{F}^\bot},X)))\,d\mu_M\\
&=\int_M(-pf_0^{p-1}g(\nabla  f_0,X)-pf_0^p({\rm div}_MX+g(H_{\mathcal{F}^\bot},X)))\,d\mu_M\\
&=\int_M (-g(\nabla  f_0^p,X)-p({\rm div}_M(f_0^{p}X)-g(\nabla f_0^p,X))-p  f_0^p g(H_{\mathcal{F}^\bot},X))\,d\mu_M\\
&=\int_M(g(p\nabla  f_0^p,X)- g(\nabla f_0^p,X)-p f_0^p g(H_{\mathcal{F}^\bot},X))\,d\mu_M.
\end{align*}
Therefore, for compactly supported vector field $X\in\Gamma(T\mathcal{F})$
\begin{equation*}
0=\frac{d}{dt}{\rm mod}_p(\mathcal{F}_t)^p_{t=0}=p\int_M f_0^{p-1}g((p-1)\nabla  f_0- f_0 H_{\mathcal{F}^\bot},X)\,d\mu_M.
\end{equation*}
Since $X\in\Gamma(T\mathcal{F})$ is arbitrary, it follows that
\begin{equation*}
(p-1)\nabla^{\top} f_0- f_0 H_{\mathcal{F}^\bot}=0.
\end{equation*}
\end{proof}

Let $\mathcal{F}$ be a $p$--admissible foliation on a Riemannian manifold $M$. We say that $\mathcal{F}$ is a {\it critical point} of a functional 
\begin{equation}\label{eq_functional}
\mathcal{F}\mapsto{\rm mod}_p(\mathcal{F}),
\end{equation}
if for any compactly supported vector field $X$ we have $\frac{d}{dt}{\rm mod}_p(\mathcal{F}_t)=0$, where $\mathcal{F}_t=\varphi_t(\mathcal{F})$ and $\varphi_t$ is a flow of $X$. 

The following result gives the characterization of critical points.
\begin{cor}\label{c_critpoint}
Let $\mathcal{F}$ be a $p$--admissible foliation on a Riemannian manifold $M$. Then, $\mathcal{F}$ is a critical point of \eqref{eq_functional} if and only if
\begin{equation}\label{eq_ctiticaliff}
\nabla(\log f_0^p)=pH_{\mathcal{F}}+qH_{\mathcal{F}^{\bot}}.
\end{equation}
where $f_0$ is an extremal function for $p$--modulus of $\mathcal{F}$ and $p,q$ are conjugate coefficients i.e. $\frac{1}{p}+\frac{1}{q}=1$.
\end{cor}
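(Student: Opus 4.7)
The plan is to apply Theorem~\ref{t_variation} to an arbitrary compactly supported vector field $X$, decompose $X = X^\top + X^\bot$ into its components tangent and orthogonal to $\mathcal{F}$, and manipulate each piece of \eqref{eq_variation} separately until the right-hand side takes the form $\int_M g(V, X)\, d\mu_M$ for an explicit vector field $V$. The freedom in choosing $X$ will then force $V = 0$ pointwise, which is exactly the critical-point condition.

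For the tangential component the computation is essentially the one already carried out in the proof of Corollary~\ref{c_nablabotextfun}: substituting $X^\top$ into \eqref{eq_variation}, rewriting ${\rm div}_\mathcal{F} X^\top$ via \eqref{eq_divmf}, and integrating $\int_M f_0^p\,{\rm div}_M X^\top\, d\mu_M$ by parts yield
\begin{equation*}
p\int_M f_0^{p-1}\, g\bigl((p-1)\nabla^\top f_0 - f_0\, H_{\mathcal{F}^\bot},\; X^\top\bigr)\, d\mu_M.
\end{equation*}
For the normal component I would first establish the companion identity to \eqref{eq_divmf}: if $Y \in \Gamma(T\mathcal{F}^\bot)$, then ${\rm div}_\mathcal{F} Y = -g(Y, H_\mathcal{F})$. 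This follows by differentiating $g(Y, e_i) = 0$ along a local orthonormal frame $e_1,\ldots,e_k$ of $T\mathcal{F}$ and summing, recalling that $H_\mathcal{F} = \sum_i (\nabla_{e_i} e_i)^\bot$ lies in $T\mathcal{F}^\bot$. Plugging this into \eqref{eq_variation} with $X^\bot$ in place of $X$ gives directly, with no further integration by parts,
\begin{equation*}
-p\int_M f_0^{p-1}\, g\bigl(\nabla^\bot f_0 - f_0\, H_\mathcal{F},\; X^\bot\bigr)\, d\mu_M.
\end{equation*}

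Any compactly supported section of $T\mathcal{F}$ or $T\mathcal{F}^\bot$ arises as the tangential, respectively normal, part of some compactly supported $X$, so the full variation vanishes for every $X$ if and only if the two pointwise identities
\begin{equation*}
(p-1)\nabla^\top f_0 = f_0\, H_{\mathcal{F}^\bot}\qquad\text{and}\qquad \nabla^\bot f_0 = f_0\, H_\mathcal{F}
\end{equation*}
both hold. Dividing by $f_0 > 0$ and summing yields $\nabla\log f_0 = H_\mathcal{F} + \tfrac{1}{p-1}H_{\mathcal{F}^\bot}$; multiplying by $p$ and using $q = p/(p-1)$ produces $\nabla(\log f_0^p) = p H_\mathcal{F} + q H_{\mathcal{F}^\bot}$. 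The converse is immediate by reading the same chain of equalities backwards.

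The only genuinely new ingredient beyond the excerpt is the identity ${\rm div}_\mathcal{F} Y = -g(Y, H_\mathcal{F})$ for $Y$ normal to $\mathcal{F}$; this is the sole point requiring a short independent computation, and the main place to guard against a sign error, since one must remember that $H_\mathcal{F}$ is normal to $\mathcal{F}$ while $H_{\mathcal{F}^\bot}$ is tangent to $\mathcal{F}$. Once this is fixed, the rest is essentially the argument of Corollary~\ref{c_nablabotextfun} enlarged by one extra direction.
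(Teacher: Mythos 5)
Your proof is correct and follows essentially the same route as the paper: decompose $X$ into tangential and normal parts, reuse the computation of Corollary \ref{c_nablabotextfun} for the tangential part, and for the normal part use the relation between ${\rm div}_{\mathcal{F}}$ and $H_{\mathcal{F}}$ on sections of $T^{\bot}\mathcal{F}$. The only point of divergence is that your identity ${\rm div}_{\mathcal{F}}Y=-g(Y,H_{\mathcal{F}})$ carries the sign consistent with the convention in \eqref{eq_divmf}, whereas the paper states it with the opposite sign and has compensating slips in the intermediate formula \eqref{eq_varxbot}; both computations land on the same final identity \eqref{eq_ctiticaliff}.
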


\begin{proof}
We begin proof by stating general facts. By Corollary \ref{c_nablabotextfun}
\begin{equation}\label{eq_nablatopfp}
\nabla^{\top}(\log f_0^p)=\frac{p}{p-1}H_{\mathcal{F}^{\bot}}=qH_{\mathcal{F}^{\bot}}.
\end{equation}
Moreover, since ${\rm div}_{\mathcal{F}}X=g(X,H_{\mathcal{F}})$ for $X\in\Gamma(T^{\bot}\mathcal{F})$, then
\begin{equation}\label{eq_varxbot}
\frac{d}{dt}{\rm mod}_p(\mathcal{F}_t)_{t=0}^p=-p\int_M g(\nabla  f_0^p- f_0^pH_{\mathcal{F}},X)\,d\mu_M
\end{equation}
for all compactly supported vector fields $X\in\Gamma(T^{\bot}\mathcal{F})$.

Assume $\mathcal{F}$ is a critical point of a functional \eqref{eq_functional}. Then by \eqref{eq_varxbot} we get $\nabla^{\bot}f_0^p=f_0^pH_{\mathcal{F}}$, hence
\begin{equation*}
\nabla^{\bot}(\log f_0^p)=pH_{\mathcal{F}}.
\end{equation*}
This, together with \eqref{eq_nablatopfp}, implies \eqref{eq_ctiticaliff}.

Assume now \eqref{eq_ctiticaliff} holds. Right--hand side of \eqref{eq_variation} is linear with respect to $X$. Moreover, for $X$ tangent to $\mathcal{F}$ we have (compare proof of Corollary \ref{c_nablabotextfun}) $\frac{d}{dt}{\rm mod}_p(\mathcal{F}_t)_{t=0}=0$. Hence, by \eqref{eq_varxbot}, it suffices to show that
\begin{equation*}
\int_M g(\nabla  f_0^p- f_0^pH_{\mathcal{F}},X)\,d\mu_M=0
\end{equation*}
for all compactly supported vector fields $X\in\Gamma(T^{\bot}\mathcal{F})$. This follows by assumption \eqref{eq_ctiticaliff}, which implies $\nabla^{\bot}(f_0^p)=f_0^pH_{\mathcal{F}}$.
\end{proof}

Now, we consider the case of two orthogonal foliations i.e. we assume that for a given foliation $\mathcal{F}$ on a Riemannian manifold $(M,g)$ the distribution $\mathcal{G}=\mathcal{F}^{\bot}$ is integrable. Let $p,q>1$, be conjugate coefficients i.e. $\frac{1}{p}+\frac{1}{q}=1$.

\begin{thm}\label{t_twofol}
Assume $\mathcal{F}$ is $p$--admissible and $\mathcal{G}$ is $q$--admissible. Then the following conditions are equivalent:
\begin{enumerate}
\item $\mathcal{F}$ and $\mathcal{G}$ are a critical points of the functionals
\begin{equation*}
\mathcal{F}\mapsto{\rm mod}_p(\mathcal{F})\quad\textrm{and}\quad\mathcal{G}\mapsto{\rm mod}_q(\mathcal{G}),
\end{equation*}
respectively,
\item the pair $(\mathcal{F},\mathcal{G})$ is a critical point of a functional
\begin{equation*}
(\mathcal{F},\mathcal{G})\mapsto{\rm mod}_p(\mathcal{F}){\rm mod}_q(\mathcal{G}),
\end{equation*}
\item the extremal functions $f_0$ of $p$--modulus of $\mathcal{F}$ and $g_0$ of $q$--modulus of $\mathcal{G}$ are related as follows
\begin{equation}\label{eq_fgdependent}
{\rm mod}_q(\mathcal{G})^q\cdot f_0^p={\rm mod}_p(\mathcal{F})^p\cdot g_0^q.
\end{equation}
\end{enumerate}
\end{thm}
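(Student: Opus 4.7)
The plan is to establish $(1) \Leftrightarrow (3)$ directly from Corollary \ref{c_critpoint}, and then $(1) \Leftrightarrow (2)$ by combining the linearity of the variation formula \eqref{eq_variation} with a tangent/normal decomposition of vector fields. The implication $(1) \Rightarrow (2)$ is an immediate product-rule observation, so the real content lies in $(2) \Rightarrow (1)$ and in the gradient comparison for $(1) \Leftrightarrow (3)$.

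For $(1) \Leftrightarrow (3)$ I will apply Corollary \ref{c_critpoint} to both $\mathcal{F}$ and $\mathcal{G}$. Using $\mathcal{G}=\mathcal{F}^{\bot}$, $\mathcal{G}^{\bot}=\mathcal{F}$, $H_{\mathcal{G}}=H_{\mathcal{F}^{\bot}}$, $H_{\mathcal{G}^{\bot}}=H_{\mathcal{F}}$, together with the conjugate relation $\frac{1}{p}+\frac{1}{q}=1$, the two criticality conditions take the \emph{identical} form
\begin{equation*}
\nabla(\log f_0^p)=pH_{\mathcal{F}}+qH_{\mathcal{F}^{\bot}}=\nabla(\log g_0^q).
\end{equation*}
Hence (1) yields $f_0^p=c\,g_0^q$ on $M$ (assumed connected) for some positive constant $c$; integrating and using $\int_M f_0^p\,d\mu_M={\rm mod}_p(\mathcal{F})^p$ together with the analogous identity for $g_0$ pins $c$ down to exactly \eqref{eq_fgdependent}. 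Conversely, (3) forces $\nabla(\log f_0^p)=\nabla(\log g_0^q)$, and I will reconstruct the full critical equation from its tangent and normal pieces: Corollary \ref{c_nablabotextfun} applied to $\mathcal{F}$ gives the tangent part $\nabla^{\top}(\log f_0^p)=qH_{\mathcal{F}^{\bot}}$, while applied to $\mathcal{G}$ it gives the $\mathcal{F}$-normal part $\nabla^{\bot}(\log g_0^q)=pH_{\mathcal{F}}$. Transferring each through the equality of gradients supplies the missing component on the other side and yields (1).

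For $(2)\Rightarrow(1)$ I will use that the flow of any $X\in\Gamma(T\mathcal{F})$ preserves $\mathcal{F}$, so ${\rm mod}_p(\mathcal{F}_t)$ is constant in $t$ and only ${\rm mod}_q(\mathcal{G}_t)$ can vary. Writing $A(t)={\rm mod}_p(\mathcal{F}_t)$ and $B(t)={\rm mod}_q(\mathcal{G}_t)$, the assumption $(AB)'(0)=0$ then forces $B'(0)=0$ for every $X\in\Gamma(T\mathcal{F})$, and symmetrically $A'(0)=0$ for every $X\in\Gamma(T\mathcal{G})$. Since \eqref{eq_variation} is linear in $X$ and any compactly supported $X$ decomposes smoothly as $X^{\top}+X^{\bot}$ with $X^{\top}\in\Gamma(T\mathcal{F})$ and $X^{\bot}\in\Gamma(T\mathcal{G})$, combining these nontrivial vanishings with the trivial ones in the complementary direction (where $\mathcal{F}_t=\mathcal{F}$ or $\mathcal{G}_t=\mathcal{G}$) gives $A'(0)=0$ and $B'(0)=0$ for every $X$, which is (1). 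The main obstacle I anticipate is the careful bookkeeping of the conjugate swap between $p$ and $q$ (and correspondingly $\mathcal{F}$ and $\mathcal{G}$) in the two applications of Corollary \ref{c_critpoint} and Corollary \ref{c_nablabotextfun}; everything else follows directly from the machinery developed in earlier sections.
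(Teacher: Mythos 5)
Your proposal is correct and follows essentially the same route as the paper: the product rule together with the tangent/normal splitting and linearity of the variation for $(1)\Leftrightarrow(2)$ (the paper, like you, needs the positivity of the moduli here, which it gets from Proposition \ref{p_extfun}), and Corollaries \ref{c_critpoint} and \ref{c_nablabotextfun} with the conjugate-exponent bookkeeping for $(1)\Leftrightarrow(3)$. The only (harmless) divergence is that you pin down the constant $c$ by integrating $f_0^p=c\,g_0^q$ over $M$ directly, whereas the paper runs the H\"older-equality computation ${\rm mod}_p(\mathcal{F})\,{\rm mod}_q(\mathcal{G})=\int_M f_0g_0\,d\mu_M=c^{1/p}{\rm mod}_q(\mathcal{G})^q$; both yield the same value of $c$.
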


\begin{proof}
(1) $\Rightarrow$ (2) Follows from the equality
\begin{multline}\label{eq_modpq}
\frac{d}{dt}({\rm mod}_p(\mathcal{F}_t){\rm mod}_q(\mathcal{G}_t))_{t=0}=\\
\frac{d}{dt}{\rm mod}_p(\mathcal{F}_t)_{t=0}\cdot {\rm mod}_q(\mathcal{G})+{\rm mod}_p(\mathcal{F})\cdot \frac{d}{dt}{\rm mod}_q(\mathcal{G}_t)_{t=0}.
\end{multline}
(2) $\Rightarrow$ (1) By existence of extremal functions of $p$--modulus of $\mathcal{F}$ and $q$--modulus of $\mathcal{G}$, it follow by Proposition \ref{p_extfun} that $p$--modulus of $\mathcal{F}$ and $q$--modulus of $\mathcal{G}$ are positive. If $X\in\Gamma(T^{\bot}\mathcal{F})$, then $\frac{d}{dt}{\rm mod}_q(\mathcal{G}_t)_{t=0}=0$, hence, by \eqref{eq_modpq}, 
\begin{equation*}
\frac{d}{dt}{\rm mod}_p(\mathcal{F}_t)_{t=0}=0.
\end{equation*}
If $X\in\Gamma(T\mathcal{F})$, then ${\rm mod}_p(\mathcal{F}_t)_{t=0}=0$. Since the variation of modulus is linear with respect to $X$, it follows that $\frac{d}{dt}{\rm mod}_p(\mathcal{F}_t)_{t=0}=0$ for any compactly supported vector field $X$. Analogously $\frac{d}{dt}{\rm mod}_q(\mathcal{G}_t)_{t=0}=0$ for any compactly supported vector field $X$.\\
(1) $\Leftrightarrow$ (3) By Corollary \ref{c_critpoint} condition (1) is equivalent to the following
\begin{equation}\label{eq_fgnabla}
\nabla(\log f_0^p)=pH_{\mathcal{F}}+qH_{\mathcal{G}}=\nabla(\log g_0^q).
\end{equation}  

Assume (1) holds. Then by \eqref{eq_fgnabla}, $f_0^p=Cg_0^q$ for some constant $C>0$. Hence $f_0$ and $g_0$ are H\" older dependent. Thus
\begin{align*}
{\rm mod}_p(\mathcal{F}){\rm mod}_q(\mathcal{G})&=\left(\int_M f_0^p d\mu_M \right)^{\frac{1}{p}}\left(\int_M g_0^q \,d\mu_M \right)^{\frac{1}{q}}=\int_M f_0 g_0 \,d\mu_M\\
&=\int_M C^{\frac{1}{p}}g_0^{\frac{q}{p}+1} \,d\mu_M=C^{\frac{1}{p}}\int_M g_0^q\, d\mu_M\\
&=C^{\frac{1}{p}}{\rm mod}_q(\mathcal{G}_0)^q.
\end{align*}
Therefore
\begin{equation*}
C=\frac{{\rm mod}_p(\mathcal{F})^p}{{\rm mod}_q(\mathcal{G})^q},
\end{equation*}
so \eqref{eq_fgdependent} holds.

Assume now $f_0$ and $g_0$ are H\" older dependent and \eqref{eq_fgdependent} holds. Thus 
\begin{equation}\label{eq_fgdep2}
\nabla(\log f_0^p)=\nabla(\log g_0^q). 
\end{equation}
By Corollary \ref{c_nablabotextfun} we have
\begin{equation*}
\nabla^{\mathcal{F}}(\log f_0^p)=q H_{\mathcal{G}}\quad\textrm{and}\quad
\nabla^{\mathcal{G}}(\log g_0^q)=p H_{\mathcal{F}},
\end{equation*}
where $\nabla^{\mathcal{F}}$ and $\nabla^{\mathcal{G}}$ denote tangent to $\mathcal{F}$ and to $\mathcal{G}$ part of the gradient, respectively. By above equalities and by \eqref{eq_fgdep2}
\begin{equation*}
\nabla(\log f_0^p)=\nabla^{\mathcal{F}}(\log f_0^p)+\nabla^{\mathcal{G}}(\log g_0^q)=q H_{\mathcal{G}}+p H_{\mathcal{F}},
\end{equation*}
hence, by Corollary \ref{c_critpoint}, $\mathcal{F}$ is a critical point of a functional \eqref{eq_functional}. Analogously, $\mathcal{G}$ is a critical point of a functional \eqref{eq_functional} with a coefficient $q$. Therefore (1) holds. 
\end{proof}

\begin{ackn} This article is based on a part of author's PhD Thesis. The author wishes to thank her advisor Professor Antoni Pierzchalski for helpful discussions. In addition, the author wishes to thank Kamil Niedzia\l omski for helpful discussions that led to improvements of some of the theorems. 
\end{ackn}

\end{document}